\renewcommand{\l}{\ell}
\newcommand{\N}{\mathbb N}
\newcommand{\NN}{\mathbb N}
\newcommand{\R}{\mathbb R}
\newcommand{\RR}{\mathbb R}
\newcommand{\II}{\mathcal I}
\newcommand{\diam}{\operatorname{diam}}
\newcommand{\dist}{\operatorname{dist}}
\newcommand{\spa}{\operatorname{span}}
\renewcommand{\H}{\mathcal H^1}
\newcommand\eps{\varepsilon}
\newcommand\al{\alpha}
\newcommand\ga{\gamma}
\newtheorem{theorem}{Theorem}[]
\newtheorem{proposition}[theorem]{Proposition}
\newtheorem{lemma}[theorem]{Lemma}
\theoremstyle{remark}
\begin{document}
\title[Failure of the projection theorem in Banach spaces]{The Besicovitch-Federer projection theorem is false in every infinite dimensional Banach space}
\author{David Bate \and Marianna Cs\"ornyei \and Bobby Wilson}
\date{\today}
\address{Department of Mathematics, The University of Chicago, 5734 S. University Avenue, Chicago, IL 60637, USA}
\email{bate@math.uchicago.edu}
\email{csornyei@math.uchicago.edu}
\email{bobbylew@math.uchicago.edu}

\begin{abstract}
We construct a purely unrectifiable set of finite $\H$-measure in every infinite dimensional separable Banach space $X$ whose image under every $0\neq x^*\in X^*$ has positive Lebesgue measure.  This demonstrates completely the failure of the Besicovitch-Federer projection theorem in infinite dimensional Banach spaces.
\end{abstract}
\maketitle

\section{Introduction}

%{\it discussion of the notion of rectifiability in $R^n$, projection theorem, and approximate tangent}

In a metric space $X$, a set $E\subset X$ is called \emph{rectifiable} if it can be covered, up to an $\H$-negligible set, by a countable family of Lipschitz images of $\RR$. A set is \emph{purely unrectifiable}, if it meets every Lipschitz image (or, equivalently, it meets every rectifiable set) in $\H$-measure zero.  Here and throughout this paper, $\H$ represents the 1-dimensional Hausdorff measure on $X$. For information about rectifiable and purely unrectifiable sets in a general metric space, see \cite{kirchheim}.

If $X=\R^n$, or more generally, if $X$ is a Banach space that admits the Radon-Nikodym property RNP, then these definitions have some pliability. By definition (see \cite{b-lindenstrauss}), $X$ admits RNP if each Lipschitz $f:\,\R\to X$ is differentiable at almost every point in $\R$.  It follows that there is an equivalent description of rectifiable sets with finite $\H$ measure: the sets that are rectifiable are those sets, $E \subset X$, that admit an \emph{approximate tangent} at $\H$-almost every of their points.  An approximate tangent at a point, $x$, is defined with respect to a subset $F$ for which $x$ is a \emph{density point}, i.e. for which $\H(F\cap B(x,r))/2r\to 1$ as $r\to 0$. A set $E$ has an \emph{approximate tangent}, $\theta$, at $x$, if there is a subset $F\subset E$ for which $x$ is a density point, and for every $\{x_n\} \subset F$ such that $x_n\rightarrow x$, $(x_n-x)/\|x_n-x\| \rightarrow \theta$. 

Furthermore, we observe that, in $\R^n$, the existence of an approximate tangent for $E$ implies that the projection of $E$ onto a line with direction not perpendicular to the approximate tangent direction has positive measure. 
%The same is true in a Banach space for any $x^*$ for which $\theta\not\in \ker x^*$. 
%Therefore, rectifiability implies what we can call ``good'' projections on all lines except at most one.  
A fundamental result of geometric measure theory is the Besicovitch-Federer projection theorem \cite{mattila} which characterizes pure unrectifiability in terms of projections.  The projection theorem states that for any set $E \subset \mathbb{R}^n$ with $\H(E)<\infty$, the projection of $E$ in almost every direction has measure zero \emph{if and only if} $E$ is purely unrectifiable.
%Thus we have three equivalent notions of rectifiability, and hence purely unrectifiability, for any $E \subset \R^n$ of finite $\H$ measure.

%We now have three definitions of rectifiability and, equivalently, three definitions of pure unrectifiability.  Specifically, the following are equivalent for $\H(E)<\infty$:
%	\begin{itemize}
%		\item For every Lipschitz curve (or rectifiable set), $\gamma$, $\H(E \cap\gamma)=0$.
%		\item $E$ has approximate tangents at no points.
%		\item $E$ has zero projection in almost every direction.
%	\end{itemize}

%{\it stating Thierry's result, and discussing that he actually construct a cube of good directions, which is not Aronszajn null, but it is haar-null}. 

%The notion of rectifiability naturally generalises to any infinite dimensional Banach space $X$ (indeed, to any metric space) by simply replacing $\R^n$ with $X$ in Definition \ref{rect}.  
It is a natural question to ask whether the projection theorem is also true in Banach spaces.  An immediate problem when formulating such a question is the nonexistence of an invariant probability measure on the set of projections in infinite dimensions (by a {\it projection} of course we mean the image of our set under a $0\neq x^*\in X^*$).  However, there are several notions of ``null set'', which suffice for such a formulation, see \cite{b-lindenstrauss}.

Indeed, in \cite{depauw}, De Pauw shows that the projection theorem fails in $\ell_2$, when considering \emph{Aronszajn-null} sets. He accomplishes this by constructing a purely unrectifiable set, $E \subset \ell_2$, with $\H(E)<\infty$, and a \emph{cube} $C$ in $\ell_2^*$ (of the form $C=\{x_0^*+\sum c_ix_i^*~:~ 0\le c_i\leq 1\}$) such that the image of $E$ under any $x^* \in C$ has positive measure.  The cube has positive measure with respect to a cube measure on $\ell_2$ which implies that it is not Aronszajn null.

However, cubes are Haar-null.  Therefore it is natural to ask whether there is a purely unrectifiable set in $\ell_2$ (or, more generally, in a Banach space $X$), that has finite $\H$-measure but for which the set of those $x^*$ for which the projection is positive, is not Haar-null. In this paper, we will answer this question affirmatively. Moreover, we will show that:

\begin{theorem}\label{thm}
In every separable Banach space $X$ there is a purely unrectifiable set $E$ that has finite $\H$-measure, but every projection of $E$ is of positive Lebesgue measure.
\end{theorem}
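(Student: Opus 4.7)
The plan is to build $E$ as an iterated Cantor-type fractal inside $X$, with each stage using a new ``axis'' drawn from a carefully chosen normalized basic sequence $\{e_n\}\subset X$. At stage $n$, one replaces each parent piece by $N_n$ equally spaced children along $e_n$ with scale factor $\lambda_n=1/N_n$. Writing $D_n=\lambda_1\cdots\lambda_n$, the limit set $E$ consists of points $\sum_{n\geq1}i_n\,D_{n-1}\,e_n/N_n$ with $i_n\in\{0,\ldots,N_n-1\}$. Because the total diameter-sum at stage $n$ equals $\prod_{k\leq n}(N_k\lambda_k)=1$, a standard Hutchinson-type mass bound gives $\H(E)<\infty$.

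For pure unrectifiability I would mimic the classical four-corner Cantor argument: a Lipschitz curve $\gamma:\R\to X$ of constant $L$ visiting a stage-$n$ piece must spend parameter-length $\gtrsim D_n/L$ inside it, while it can traverse at most $O(LN_n)$ stage-$n$ pieces, since the $N_n$ siblings along $e_n$ lie in a segment of length $D_{n-1}$. Comparing against the total piece count $\prod_{k\leq n}N_k$ shows the visited fraction decays and $\H(E\cap\gamma)=0$ provided the $N_n$'s grow sufficiently fast.

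For positive projection, observe that $x^*(E)$ is itself a Cantor subset of $\R$ whose stage-$n$ children are spaced by $D_{n-1}|x^*(e_n)|/N_n$ while having diameter $\lesssim D_n$. Whenever $|x^*(e_n)|$ is bounded below at infinitely many $n$, a standard Cantor-to-interval overlap argument gives $\L(x^*(E))>0$. So it suffices to arrange that every $0\neq x^*\in X^*$ sees the axes $\{e_n\}$ nontrivially at sufficiently many scales.

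The main obstacle is exactly this last requirement: for a generic normalized basic sequence $\{e_n\}$ one has $x^*(e_n)\to 0$ (for instance for weakly-null sequences), so no single one-axis-per-stage construction can simultaneously deliver $\sup_n|x^*(e_n)|>0$ uniformly over all $x^*\in X^*$. Overcoming this presumably requires splitting each parent piece along several axes simultaneously at each stage, with the splitting directions drawn from an exhausting chain of finite-dimensional subspaces $V_n\subset X$ (whose density in $X$ ensures, via Hahn-Banach, that every functional sees one of them with controlled norm), combined with a quantitative version of the Besicovitch-type counting bound above to preserve pure unrectifiability. The delicate interplay between ``density of directions'' (needed for positive projection everywhere in $X^*$) and ``sparsity along any single curve'' (needed for unrectifiability) is the heart of the construction, and it is precisely here that the infinite-dimensionality of $X$ is essential.
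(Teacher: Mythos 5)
Your proposal does not reach a proof, and you have in fact identified only one of two fatal gaps yourself. The gap you name --- that no normalized basic sequence $\{e_n\}$ can satisfy ``$|x^*(e_n)|$ bounded below for infinitely many $n$'' for \emph{every} $0\neq x^*$ --- is indeed fatal as stated (the coordinate functionals $e_k^*$ already vanish on all but one $e_n$), and the fix you gesture at (splitting along many directions per stage, drawn from an exhausting chain of finite-dimensional subspaces) is not carried out and points in the wrong direction: making every functional ``see'' the axes strongly is not what is needed. The second, unacknowledged gap is the projection step itself: the claim that lower-bounded spacing of the stage-$n$ children plus an ``overlap argument'' yields $\mathcal L^1(x^*(E))>0$ is false in general. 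The four-corner Cantor set in $\R^2$ has exactly this structure in every direction, yet Besicovitch--Federer forces almost every projection to be null; overlapping Cantor pieces generically produce measure zero, and positive measure requires a genuine covering/tangent mechanism that your sketch does not supply.

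The paper's construction is structurally different and resolves both issues at once. The set $E$ is not a Cantor set but the image of a single map $f=\sum_n f_n x_n:[0,1]\to X$, where the $f_n$ are sawtooth functions and $x_n$ is a basic sequence with $\|x_n\|=|\alpha_n|$ for a prescribed $\alpha\in\ell_2\setminus\ell_1$, built (Proposition \ref{prop}, via Dvoretzky--Rogers) so that $\sum c_nx_n$ converges unconditionally whenever $|c_n|\le 1$. The dichotomy driving the proof is the opposite of yours: unrectifiability comes from $\sum\|x_n\|=\infty$ (large oscillations at a.e. point, contradicting the density statement in Kirchheim's theorem, Lemma \ref{l:kirchheim}), while positive projections come from the fact that $\sum|x^*(x_n)|<\infty$ automatically for \emph{every} $x^*$ --- i.e. every functional sees the construction as summable, hence as a \emph{rectifiable} planar set with non-vertical approximate tangents at two distinct slopes, whose projections are therefore positive. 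Functionals vanishing on $\overline{\spa}\{x_n\}$ are handled by taking a countable union of scaled copies $E^n$ built over a dense family of extra vectors $x_0^n$. If you want to salvage a Cantor-type construction you would still need to import some analogue of this ``summable from every functional's viewpoint'' mechanism; spacing bounds alone cannot work.
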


Note that if $X$ is not separable, then Theorem \ref{thm} fails, for an obvious reason: every set $E$ of finite $\H$-measure is separable, and for every $x^*$ with $ \overline\spa\, E \subset \ker x^*$ we of course have $x^*(E)=0$.

In order to prove Theorem \ref{thm} in a general separable Banach space, we will construct a sequence $x_n$ that, in some sense, ``behaves'' like an unconditional basis.  Then we will use this sequence $x_n$ as some sort of coordinate vectors when we construct the set $E$.

Let us make this more precise. Although not every separable Banach space admits a Schauder basis, see \cite{enflo}, it is well-known that every Banach space $X$ contains a {\it basic sequence}.  That is a sequence $x_1,x_2,\dots$ such that every $x\in \overline{\spa}\{x_1,x_2,\dots\}$ can be expressed in a unique way as $\sum_{n=1}^\infty c_nx_n$ with some coefficients $c_n$. A basic sequence is called an {\it unconditional basic sequence}, if all 
these sums $\sum_{n=1}^\infty c_nx_n$ converge unconditionally. By the Dvoretzky-Rogers lemma (see \cite{dvore}), every Banach space $X$ contains a sequence $y_1,y_2,\dots$ for which the series $\sum y_n$ converges unconditionally (moreover, for every sequence $(\alpha_n)\in\ell_2$ one can find an unconditionally convergent series with $\|y_n\|=|\alpha_n|$). 

There are Banach spaces that do not admit any unconditional basic sequences \cite{gowers-maurey}. However, we will show that the construction of a basic sequence and the proof of the Dvoretzky-Rogers lemma can easily be combined together to obtain a basic sequence such that $\sum_{n=1}^\infty c_nx_n$ converges unconditionally provided that $|c_n|\leq 1$ for every $n$.  The main difference between the properties of our sequence and an unconditional basic sequence is that we require the sums to converge unconditionally only when there is a bound on the coefficients $c_n$, and not for every convergent $\sum c_nx_n$. As an illustrative example, consider $L^p([0,1])$ for $1<p<2$.  The trigonometric system, $\{ e^{2\pi i n y}\}_{n \in \mathbb{Z}}$ is a basis but not an unconditional basis for $L^p([0,1])$.  The sequence $x_n:=2^{-n}e^{2\pi i n y}$ is still a conditional basis for $L^p([0,1])$, but 
%	\begin{align*}
$\sum_{n=1}^{\infty} c_nx_n$ 
%	\end{align*}
converges unconditionally whenever $|c_n|\leq 1$.

%{\it an informative description of the construction of E and of our proof}

After we constructed our sequence $x_n$, we will proceed by defining our purely unrectifiable set as the image of a function $f:[0,1]\rightarrow X$.  We construct our function, $f$, by defining a sequence of component functions $f_n:[0,1]\rightarrow \mathbb{R}$ and letting
%\begin{align*}
$f:=\sum_n f_nx_n$.
%\end{align*}

The component functions $f_n$ we use will have a similar nature (but, in fact, they are much simpler) than the component functions used in De Pauw's construction in $\ell_2$. Using also the fact that $x_n$ is a carefully chosen sequence, we will show that the pure unrectifiability of the image of $f$ depends on the summability of the norms $\|x_n\|$. One of the main ingredients of our proof is Kirchheim's theorem \cite{kirchheim} for rectifiable metric spaces to compensate for the absence of the Radon-Nikodym property for general Banach spaces. This will enable us to show in every Banach space that the set $E$ we obtain is purely unrectifiable, provided that $\sum\|x_n\|$ does not converge. However, $\sum |x^*(x_n)|$ converges for any $x^*\in X^*$, therefore, from the ``point of view'' of any $x^*$, the $\sum\|x_n\|$ ``looks like'' a convergent series, which in turns means that $E$ ``looks like'' a rectifiable set. This will enable us to show that indeed the projections of $E$ has positive measure. 

We will make the above reasoning explicit by showing that $(x^*(x_n))\in\ell_1$ implies that we can factor $x^*$ through a projection to $\R^2$, the image of which is a rectifiable planar set that has many approximate tangents. Thus, the projection of this planar set onto every line will have positive measure.

Finally, we round up the paper by studying the converse direction in the Besi\-covitch-Federer projection theorem, namely, the projections of rectifiable sets in infinite dimensional spaces, in Section \ref{s:proof-and-rect}. Not surprisingly, we will see that finding large projections are much easier for rectifiable sets than for purely unrectifiable sets.

%{\it how the paper is organised, and the notations we use, like $|\cdot|$ for Lebesgue measure on $\R$, etc}

\bigskip
Throughout this paper $X$ will denote a fixed infinite dimensional separable Banach space with norm $\|\cdot\|$, and the unit sphere of $X$ is denoted by $S(X)=\{x:\,\|x\|=1\}$. For a sequence of real numbers $\alpha$, by $\|\alpha\|$ we denote the $\l_2$ norm of $\alpha$ and by $\|\alpha\|_1$ the $\l_1$ norm of $\alpha$. We will use $|\cdot |$ to signify both absolute value of numbers, and Lebesgue measure of sets on $\R$.

%In Section \ref{s:construct-xn} we construct our special sequence $x_n$ in $X$.  In Section \ref{s:construction} we construct our the purely unrectifiable set $E\subset X$ as well as two simple essential properties that we will require of $E$.  Following this, in Section \ref{s:rect}, we show that $E$ is purely unrectifiable.  Then in Section \ref{s:projection}, we prove that every projection is a good projection for the set.  Finally we tie all of our results together to prove Theorem \ref{thm}.

\section{Construction of the sequence $x_n$}\label{s:construct-xn}

In this section we prove the following:
\begin{proposition}\label{prop}
In every Banach space $X$, for every sequence $\alpha=(\alpha_n)\in\ell_2$ and for every $\eps>0$ there is a basic sequence $x_1,x_2\dots$ such that $\|x_n\|=|\alpha_n|$, and
\begin{itemize}
\item[(i)] $\sum_{n=1}^\infty c_nx_n$ converges whenever $|c_n|\le 1$ for all $n$,
\item[(ii)] $\sup_{|c_n|\le 1}  \|\sum_{n=1}^\infty c_nx_n\|\le(1+\eps)\|\alpha\|.$
\end{itemize}
\end{proposition}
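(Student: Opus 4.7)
The plan is to interleave Mazur's classical procedure for extracting basic sequences with the inductive construction behind the Dvoretzky--Rogers lemma, as the authors indicate in the paragraph just before the proposition.

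Fix positive numbers $\eta_n$ with $\prod_{n=1}^\infty (1+\eta_n) < 1+\eps$. Two classical facts will be used repeatedly. First, Mazur's lemma: for any finite-dimensional $Y \subset X$ and any $\eta>0$, the set of $x\in X$ satisfying $\|y\| \le (1+\eta)\|y+tx\|$ for every $y \in Y$ and $t\in\R$ contains a closed subspace $W$ of finite codimension, obtained as the intersection of the kernels of a finite $\eta$-net in $S(Y^*)$, extended to $X^*$ by Hahn--Banach. Second, the Dvoretzky--Rogers step: given unit vectors $u_1,\dots,u_{n-1}\in X$ and $\delta>0$, inside any infinite-dimensional subspace $Z$ of $X$ one can find a unit vector $u$ satisfying
\[
\Bigl\|\sum_{i<n} c_i u_i + c\,u\Bigr\|^2 \le (1+\delta)\Bigl(\bigl\|\textstyle\sum_{i<n} c_i u_i\bigr\|^2 + c^2\Bigr)
\]
for every $c_i,c\in\R$; this $u$ is produced by applying John's theorem to a sufficiently large finite-dimensional slice of $Z$ and choosing a near-contact point of the John ellipsoid, a condition which again only constrains $u$ through finitely many continuous linear functionals.

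The construction runs inductively. At step $n$, having chosen $x_1,\dots,x_{n-1}$ with $\|x_i\|=|\alpha_i|$, let $W_n$ be the Mazur subspace for the pair $(\spa(x_1,\dots,x_{n-1}),\eta_n)$; inside the infinite-dimensional subspace $W_n$, apply the Dvoretzky--Rogers step with $\delta=\eta_n$ to the normalized vectors $u_i := x_i/|\alpha_i|$ to produce a unit vector $u_n$, and set $x_n := |\alpha_n|\,u_n$. The Mazur conditions guarantee that $(x_n)$ is a basic sequence with basis constant at most $\prod_n(1+\eta_n) < 1+\eps$. Iterating the displayed inequality in the $u_i$'s and rescaling by $|\alpha_i|$ yields
\[
\Bigl\|\sum_{i=1}^N c_i x_i\Bigr\|^2 \le \prod_{k=1}^\infty (1+\eta_k) \sum_{i=1}^N c_i^2\,\alpha_i^2 \quad\text{for every }N\ge 1.
\]
Restricting to $|c_i|\le 1$ gives a uniform partial-sum bound by $(1+\eps)\|\alpha\|$; applying this to the tail $c'_i = c_i\cdot\mathbf{1}_{M<i\le N}$ shows that the partial sums of $\sum c_n x_n$ are Cauchy (since $(\alpha_n)\in\ell_2$), proving (i); passing to the limit $N\to\infty$ in the partial-sum bound then gives (ii).

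The main obstacle is that the Mazur and Dvoretzky--Rogers conditions must be imposed on the same $u_n$. Both, however, are of the form ``$u$ annihilates (or nearly annihilates) a prescribed finite collection of continuous linear functionals on $X$'', and hence together cut out a closed subspace of finite codimension; since $X$ is infinite-dimensional, this subspace still contains a unit vector, any of which may serve as $u_n$. The remainder is bookkeeping: choosing the $\eta_n$ small enough that the product of the error factors stays below $1+\eps$.
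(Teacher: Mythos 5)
Your overall architecture (interleaving Mazur's extraction of a basic sequence with a Dvoretzky--Rogers-type selection, then summing tails in $\ell_2$) is the right idea, and the Mazur step and the final bookkeeping are fine. The fatal problem is the ``Dvoretzky--Rogers step'' you isolate: the claim that, given \emph{arbitrary} unit vectors $u_1,\dots,u_{n-1}$ and any infinite-dimensional $Z\subset X$, one can find a unit vector $u\in Z$ with $\|\sum_{i<n}c_iu_i+cu\|^2\le(1+\delta)\bigl(\|\sum_{i<n}c_iu_i\|^2+c^2\bigr)$ for all coefficients. This is false. Take $X=Z=\ell_1$, $n=2$, $u_1=e_1$: for \emph{every} unit vector $u=(s_1,s_2,\dots)$ one has $\max(\|e_1+u\|_1,\|e_1-u\|_1)=\max(|1+s_1|,|1-s_1|)+\sum_{i\ge2}|s_i|=(1+|s_1|)+(1-|s_1|)=2$, whereas your inequality with $c_1=1$, $c=\pm1$ demands this maximum be at most $\sqrt{2(1+\delta)}<2$. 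Nor is this just an unlucky $u_1$ that a cleverer selection rule would avoid: the pair $u_1=\frac12(e_1+e_2)$, $u_2=\frac12(e_1-e_2)$ satisfies your $n=2$ inequality with $\delta=0$ (indeed $\|c_1u_1+c_2u_2\|_1=\max(|c_1|,|c_2|)$), yet no unit vector $u_3$ can satisfy the $n=3$ inequality for small $\delta$: testing $(c_1,c_2)=(1,1)$ and $(1,-1)$ forces $|s_1|$ and $|s_2|$ each to be at least $2-\sqrt{2(1+\delta)}>1/2$, contradicting $\|u_3\|_1=1$. The underlying reason is that your inequality asks the new vector to extend the \emph{ambient} norm on $\spa\{u_1,\dots,u_{n-1}\}$ in a Euclidean way, and a greedy one-vector-at-a-time choice cannot guarantee that the earlier span is compatible with any such extension. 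Relatedly, your reconciliation of the two constraints is also wrong: the set of admissible $u$ for the Dvoretzky--Rogers condition is not cut out by finitely many linear functionals (it is not a subspace, nor even convex), so it cannot simply be intersected with the Mazur subspace; in the genuine Dvoretzky--Rogers argument all the selected vectors are near-contact points of one and the same John ellipsoid, which is why they can only be produced simultaneously inside a single finite-dimensional space.

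The fix --- and this is what the paper does --- is to abandon the vector-by-vector selection and work in blocks: partition $\N$ into finite intervals $A_m$ chosen (using $\alpha\in\ell_2$) so that $\sum_m(\sum_{n\in A_m}\alpha_n^2)^{1/2}$ is close to $\|\alpha\|_2$, and apply Dvoretzky's theorem (not merely the Dvoretzky--Rogers lemma) to find, inside the finite-codimensional Mazur-type subspace determined by the previous blocks, a finite-dimensional subspace $X_m$ with normalized basis $\{v_n:n\in A_m\}$ that is $(1+\eps)^{1/2}$-equivalent to an orthonormal basis of $\ell_2^{|A_m|}$. All vectors of one block then come from a single nearly Euclidean space, which is exactly what your incremental step cannot provide; the two-sided equivalence within a block also gives the in-block half of the basis inequality, and the $\ell_2$ upper bounds are combined across blocks only via the triangle inequality, which is why the blocks must be adapted to $\alpha$ in advance.
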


Note that in (i) the series converges unconditionally since, for every series $\sum y_n$, unconditional convergence is equivalent to the fact that $\sum\eps_n y_n$ converges with any choice of $\eps_n=\pm 1$.

For the convenience of the reader first we recall the construction of the sequence in the proof of the Dvoretzky-Rogers lemma. Then we will show that this construction indeed can be done in such a way that the sequence obtained satisfies Proposition \ref{prop}.

Without loss of generality we can assume that $\eps \leq 1$. Let $c=(1+\eps)^{1/2}$. We divide $\N$ into finite intervals $A_m$, and by applying Dvoretzky's theorem, for each $m$ we choose a finite dimensional subspace $X_m=\spa\{v_n:\,n\in A_m\}$ for some vectors $v_n\in S(X)$ satisfying \begin{equation}\label{dvor}c^{-1}\sum_{n\in A_m} c_n^2\le\|\sum_{n\in A_m} c_nv_n\|^2\le c\sum_{n\in A_m} c_n^2\end{equation} with any choice of the coefficients $c_n$. Let $x_n=\alpha_nv_n$. Then
$$\|\sum_{n\in A_m} c_nx_n\|^2\le c\sum_{n\in A_m} c_n^2\alpha_n^2\le c\sum_{n\in A_m} \alpha_n^2$$ if $|c_n|\le 1$ for any $n$. Then for any finite set $A\subset\N$,
$$\|\sum_{n\in A} c_nx_n\|\le\sum_m\|\sum_{n\in A\cap A_m} c_nx_n\|\le c\sum_{\{m:\,A\cap A_m\neq\emptyset\}}(\sum_{n\in A_m} \alpha_n^2)^{1/2}.$$
Therefore, provided that we choose, as we may, the sets $A_m$ s.t. $\sum_m(\sum_{n\in A_m}\alpha_n^2)^{1/2}<c\|\alpha\|_2$, the series $\sum c_nx_n$ is Cauchy and indeed
$\|\sum c_nx_n\|\le (1+\eps)\|\alpha\|$.

In order to obtain a basic sequence, we need 
\begin{equation*}
\|\sum_{n=1}^N c_nx_n\|\le K\|\sum_{n=1}^M c_nx_n\|
\end{equation*}
for any $N<M$ with a constant $K$ that does not depend on $N,M$. Eqivalently, we denote $X_0=\{0\}$, and we need to find $K_1,K_2$ such that for every $m$, if $x\in\spa\{X_k:\,k\le m\}$ and $y,z$ are two vectors in $X_{m+1}$ spanned by two disjoint subsets of the vectors 
$\{v_n:\,n\in A_{m+1}\}$, then 
\begin{equation}\label{K1}
\|x+y\|\le K_1\|x+y+z\|,
\end{equation} and also if $v\in \spa\{X_{m+2},X_{m+3},\dots\}$ then 
$$\|x+y+z\|\le K_2\|x+y+z+v\|.$$
The second condition is equivalent to: for every $x\in\spa\{X_1,X_2,\dots,X_{m+1}\}$ and for every $y\in\spa\{X_{m+2},X_{m+3},\dots\}$,
\begin{equation}\label{K2}
\|x\|\le K_2\|x+y\|.
\end{equation}

We denote $\eps_m=\eps/2^m$ and choose a finite dimensional subspace $X^*_m\subset X^*$ such that for any $x\in \spa\{X_1,\dots,X_m\}$ there is an $x^*=x^*_m\in S(X^*_m)$ with $|x^*(x)|\ge (1-\eps_m)\|x\|$. Inductively, for each $m$, we may choose our subspaces $X_{m+1}\subset\bigcap_{x^*\in S(X^*_m)}\ker x^*$. Then for any $m$, $x\in \spa\{X_1,\dots,X_m\}$, $x^*=x^*_m$ and $y\in X_{m+1}$, we have $\|x+y\|\ge |x^*(x+y)|=|x^*(x)|\ge (1-\eps_m)\|x\|$. Iterating this, we can see that \eqref{K2} holds with $K_2:=\prod(1-\eps_m)^{-1}<\infty$.

If $y,z\in X_{m+1}$ are two vectors in $X_{m+1}$ spanned by disjoint vectors $\{v_n:\,n\in A_{m+1}\}$, then $\|y\|\leq c^2\|y+z\|=(1+\eps)\|y+z\|\le 2\|y+z\|$ by \eqref{dvor}. We also have $\|x+y+z\|\ge(1-\eps_m)\|x\|\ge \|x\|/2$, therefore 
\begin{equation}\notag \|x+y\| \leq \|x\|+\|y\|\le \|x\|+2\|y+z\|\le 
2\|x+y+z\|+3\|x\|\leq 8\|y+x+z\|.
\end{equation}
So indeed, \eqref{K1} holds, with $K_1=8$. This finishes the proof of Proposition \ref{prop}.

\section{Construction of the set $E$}\label{s:construction}

Let $X$ be a separable Banach space and $\alpha=(\alpha_n)$ be an arbitrary sequence in $\ell_2$ with $\|\alpha\|<1$.  We apply Proposition \ref{prop} with an $\eps$ small enough so that $(1+\eps)\|\alpha\|<1$ to obtain a sequence $x_1,x_2,\dots$. Also, we assume that $Y:=\overline{\spa}\{x_1,x_2,\dots\}\neq X$ and fix an arbitrary $x_0\in X\setminus Y$ with $\|x_0\|=1$. We will also use the notation $\alpha_0=\|x_0\|=1$. Then $x_0,x_1,\dots$ is a basic sequence for which 
\begin{equation}\label{K}
K:=\sup_{c_n\in[-1,1]}\|\sum_{n=0}^\infty c_nx_n\|<2.
\end{equation}

As usual, we denote by $x_n^*$ the linear functional that maps $x=\sum c_nx_n$ to $c_n$ for $x\in\ \overline{\spa}\{x_0,x_1,x_2,\dots\}:=Y_0$ and extend it to a linear functional $x_n^*\in X^*$ with the same norm. 
Then $\|x_n\|\|x_n^*\|=|\alpha_n|\|x_n^*\|\le b$ for some constant $b$ and for every $n$.

Let $g(t)$ denote the function 
$$g(t)= \begin{cases} 0 & {\rm if}\ t-[t]\in [0,\frac{1}{2})\\
t-[t]-1/2 & {\rm if}\ t-[t]\in [\frac{1}{2},1),\end{cases}$$ 
where $[t]\in \N$ is the integer part of $t\in\R$.

We also fix a sequence of even natural numbers $m_n$ that we will specify later, and put $M_0=1$, $M_n:=\prod_{k=1}^n m_k$ for $n\ge 1$. We denote the collection of all intervals of the form $I=[\frac{k-1}{M_n},\frac{k}{M_n})$, $k=1,2,\dots,M_n$, by $\II_n$. Let $f_0(t)=t$, and for $n\ge 1$ we define $$f_n(t)=M_n^{-1}g(M_n t)$$ 
and we define
$$f= \sum_{n=0}^\infty f_n x_n.$$

Since $|f_n(t)|\le 1$, by (i) of Proposition \ref{prop}, $f(t)\in X$
for every $t\in[0,1]$.  
We define our set $E$ as the image of the function $f:\,[0,1]\to X$
\[E = \{f(t): t \in [0,1]\}.\] 

There are two simple but essential properties that we will require in order to show that $E\subset X$ is purely unrectifiable.  The first one (Lemma \ref{2.2}) says that $f$ satisfies a certain Lipschitz type property. On the other hand, the second property (Lemma \ref{2.3}) will imply that we can still find large difference quotients in the neighbourhoods of almost every point in $E$.

\begin{lemma}\label{2.2} For any measurable set $S\subset [0,1]$,
$$
\H(\{f(t) : t\in S\}) \leq K\, |S|.$$
In particular, $\H(E)<2$, and $f$ satisfies Luzin's condition: $$\H(\{f(t) : t\in N\})=0$$ for any Lebesgue null set $N\subset[0,1]$. 
\end{lemma}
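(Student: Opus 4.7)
The plan is to show that $f \colon [0,1] \to X$ is Lipschitz with constant $K$, after which the lemma follows from standard properties of $\mathcal{H}^1$ under Lipschitz maps.

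First I would observe that each component function $f_n$ is $1$-Lipschitz on $[0,1]$. For $n=0$ this is immediate since $f_0(t)=t$. For $n \geq 1$, the function $g$ is piecewise affine with slope $0$ or $1$, so $g$ is $1$-Lipschitz, and $f_n(t)=M_n^{-1} g(M_n t)$ inherits a Lipschitz constant of $1$ via the scaling.

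Next, for any $s \neq t$ in $[0,1]$, set
$$c_n := \frac{f_n(s)-f_n(t)}{s-t}, \qquad n=0,1,2,\dots .$$
By the previous paragraph $|c_n| \leq 1$ for every $n$. Since $f = \sum_n f_n x_n$, we have (as a convergent series in $X$ by Proposition~\ref{prop}(i))
$$f(s)-f(t) = (s-t)\sum_{n=0}^\infty c_n x_n,$$
and applying the bound \eqref{K} gives
$$\|f(s)-f(t)\| \leq K\,|s-t|.$$
Thus $f$ is $K$-Lipschitz.

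Given the Lipschitz bound, the rest is routine: if $S \subset [0,1]$ is measurable, then for any countable cover of $S$ by intervals $\{J_i\}$ with $\sum_i |J_i| \leq |S|+\delta$, the images $f(J_i)$ cover $f(S)$ and have $\diam f(J_i) \leq K |J_i|$, whence $\mathcal{H}^1(f(S)) \leq K|S|$ after sending $\delta \to 0$. Taking $S=[0,1]$ yields $\mathcal{H}^1(E) \leq K < 2$ by \eqref{K}, and taking $S=N$ a null set yields Luzin's condition. The only non-trivial step is the Lipschitz estimate itself, and the point there is simply that the uniform bound \eqref{K} on coefficient sums $\sum c_n x_n$ with $|c_n|\le 1$ --- a consequence of our careful choice of the $x_n$ --- matches exactly the fact that each $f_n$ has derivative bounded by $1$.
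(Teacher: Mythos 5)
There is a genuine gap, and it is located exactly where you say the ``only non-trivial step'' is. Your claim that each $f_n$ is $1$-Lipschitz on all of $[0,1]$ is false: the function $g$ is piecewise affine with slopes $0$ and $1$, but it is \emph{not continuous} --- on $[1/2,1)$ it rises to values approaching $1/2$ and then drops back to $0$ at $t=1$. Consequently $f_n(t)=M_n^{-1}g(M_nt)$ has jump discontinuities of size $1/2M_n$ at the points $j/M_n$, and the quotients $c_n=(f_n(s)-f_n(t))/(s-t)$ are unbounded when $s,t$ straddle such a point closely (take $t=1/M_n$ and $s=1/M_n-\delta$ with $\delta\to 0$). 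So $f$ is not $K$-Lipschitz; it is not even continuous. A quick sanity check confirms something must be wrong: if $f$ were Lipschitz then $E=f([0,1])$ would be rectifiable by definition, contradicting Proposition \ref{p:unrectifiable}, which is the whole point of the construction.

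The correct statement, which is what the paper proves, is the weaker one that for each $n$ and each $I\in\II_n$ one has $\diam f(I)\le K|I|$. This holds because for $k\le n$ the function $f_k$ is genuinely $1$-Lipschitz \emph{on} $I$ (the discontinuities of $f_k$ lie on the grid $M_k^{-1}\N\subset M_n^{-1}\N$, hence outside the interior of $I$), while for $k>n$ one uses not a Lipschitz bound but the oscillation bound $\sup_I f_k-\inf_I f_k\le 1/2M_k<|I|$; either way $|f_k(t)-f_k(u)|\le|I|$ for $t,u\in I$, and then your coefficient argument with \eqref{K} applies. Your final covering step can be salvaged, but you must cover $S$ by intervals drawn from $\bigcup_n\II_n$ (e.g.\ $M_n$-adic approximations of $S$) rather than by arbitrary intervals $J_i$, since the diameter bound $\diam f(J)\le K|J|$ is only available for intervals of the grids $\II_n$, not for all intervals.
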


\begin{proof}
%Let $I=I_{k,n}$ denote the dyadic interval $[\frac{k}{2^n},\frac{k+1}{2^n})$ for $k,n\in\NN$. 
Note that, for $k\leq n$, each $f_k$ is Lipschitz on each interval $I\in\II_n$ with Lipschitz constant $1$, and for $k>n$ the function $f_k$ oscillates at most $1/2M_k<|I|$. Therefore for any $t,u\in I$ and for any $k$, $|f_k(t)-f_k(u)|\le |I|$ and $f(t)-f(u)$ can be expressed as $|I|\sum c_kx_k$ where $|c_k|=|f_k(t)-f_k(u)|/|I|\le 1$. By the definition of $K$ in \eqref{K}, $\diam f(I)\le K|I|$. 

The statement for a general measurable set $S$ follows by approximating $S$ by a countable union of dyadic intervals.
\end{proof}

The following lemma is a standard application of the Borel-Cantelli lemma:

\begin{lemma}\label{2.3}
If $(\alpha_n)\not\in\ell_1$ and $\alpha_n m_n\in\NN$ for every $n$, then for a.e. $t\in [0,1]$ there is an arbitrary large $n$ such that $d(t,M_n^{-1}\mathbb N)\le M_n^{-1}|\alpha_n|.$
\end{lemma}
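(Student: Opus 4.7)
The plan is to apply the second Borel--Cantelli lemma to an independent sub-family of the events
\[
B_n := \{t \in [0,1] : d(t, M_n^{-1}\N) \leq M_n^{-1}|\alpha_n|\},
\]
whose $\limsup$ is precisely the set of $t$ satisfying the conclusion of the lemma. The natural framework is the mixed-radix digit expansion $t = \sum_{k \geq 1} d_k(t)/M_k$ with $d_k(t) \in \{0, 1, \dots, m_k - 1\}$: under Lebesgue measure these digits are jointly independent, with $d_k$ uniform on $\{0,\dots,m_k-1\}$.

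A direct computation gives $\{M_n t\} = \sum_{k > n} d_k(t)/(m_{n+1} \cdots m_k)$, and the geometric tail bound $\sum_{k > n+1}(m_k-1)/(m_{n+1}\cdots m_k) < 1/m_{n+1}$ yields $\{M_n t\} < (d_{n+1}(t)+1)/m_{n+1}$. Setting
\[
B_n^- := \{t : d_{n+1}(t)+1 \leq |\alpha_n|\, m_{n+1}\},
\]
the bound implies $\{M_n t\} < |\alpha_n|$ on $B_n^-$, so $B_n^- \subseteq B_n$. Crucially, each $B_n^-$ depends only on the single digit $d_{n+1}$, so the family $(B_n^-)_n$ is mutually independent.

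It remains to show $\sum_n |B_n^-| = \infty$. The sequence $(m_n)$ has not yet been fully specified in the paper (``specified later''), so I would further impose $m_{n+1} \geq 2/|\alpha_n|$ whenever $\alpha_n \neq 0$, compatibly with the assumption $\alpha_n m_n \in \N$. Then $|\alpha_n|\, m_{n+1} \geq 2$, hence $\lfloor|\alpha_n|\, m_{n+1}\rfloor \geq |\alpha_n|\, m_{n+1}/2$, giving $|B_n^-| \geq |\alpha_n|/2$. Since $(\alpha_n)\notin\ell_1$ the series diverges, so the second Borel--Cantelli lemma for independent events yields $|\limsup_n B_n^-| = 1$, and a fortiori $|\limsup_n B_n| = 1$, which is the claim. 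The proof is essentially routine; the only delicate point is producing a genuinely independent sub-family, and the mixed-radix digit viewpoint makes this transparent once one allows the modest growth $m_{n+1} \gtrsim 1/|\alpha_n|$ on the still-unfixed sequence $(m_n)$.
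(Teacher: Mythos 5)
Your argument is correct and follows the same basic route as the paper's short proof: exhibit an independent family of events with divergent total measure inside the target events and apply the second Borel--Cantelli lemma. The one substantive difference is that the paper asserts that the events $A_n=\{d(t,M_n^{-1}\N)\le M_n^{-1}|\alpha_n|\}$ are \emph{themselves} independent with measure $2|\alpha_n|$ under the stated hypothesis $\alpha_nm_n\in\N$, whereas you retreat to the sub-events $B_n^-$ and consequently must impose the extra growth condition $m_{n+1}\ge 2/|\alpha_n|$, which is not among the lemma's hypotheses; strictly speaking you prove a variant of the lemma rather than the lemma as stated. This is harmless for the paper, since the $m_n$ are only pinned down in Section \ref{s:proof-and-rect} and your condition can be imposed there at no cost. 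Your caution is in fact not misplaced: $A_n$ is a union of intervals of radius $|\alpha_n|/M_n$ about the points of $M_n^{-1}\N$, so exact independence of the whole family wants $A_n$ to be a union of cells of $\II_{n'}$ for every $n'>n$, which requires $|\alpha_n|M_{n'}/M_n\in\N$ --- essentially $\alpha_nm_{n+1}\in\N$ rather than $\alpha_nm_n\in\N$ (the paper's count of ``$2|\alpha_n|m_n$ out of the $m_n$ subintervals'' of an $I\in\II_{n-1}$ actually describes the set $\{d(t,M_{n-1}^{-1}\N)\le M_{n-1}^{-1}|\alpha_n|\}$, not $A_n$; there is an off-by-one in the indexing). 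Your digit events $B_n^-$, depending only on $d_{n+1}(t)$, sidestep this alignment issue at the price of the floor function and hence of the lower bound on $m_{n+1}$. The remaining steps --- the telescoping tail estimate for the fractional part of $M_nt$, the inclusion $B_n^-\subseteq B_n$, and the bound $|B_n^-|=\lfloor|\alpha_n|m_{n+1}\rfloor/m_{n+1}\ge|\alpha_n|/2$ --- all check out.
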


\begin{proof}
One checks readily that the events $A_n:=\{d(t,M_n^{-1}\mathbb N)\le M_n^{-1}|\alpha_n|\}$ are independent and the probability of $A_n$ is $2|\alpha_n|$ (whenever
$2|\alpha_n|\le 1$). Indeed, each $I\in\II_{n-1}$ has $m_n$ subintervals in $\II_{n}$, and $d(t,M_n^{-1}\mathbb N)\le M_n^{-1}|\alpha_n|$ holds on $2|\alpha_n| m_n$ many out of these $m_n$ subintervals.
\end{proof}

%\begin{proof}
%For any fixed $\lambda>0$, the sets
%\[A_n=\{t\in [0,1]: d(t,2^{-n}\N)<\lambda 2^{-n}\alpha_n\}\quad n\ge 1\]
%are independent, and $|A_n|=2\lambda \alpha_n$ whenever $2\lambda\alpha_n\le 1$.  Therefore, by the Borel-Cantelli lemma, $\limsup_{n\to\infty} A_n$ 
%contains almost all of $[0,1]$ if $\alpha\not\in\ell_1$. Taking a countable intersection over $\lambda_i\to 0$ gives the result.
%\end{proof}

\section{Rectifiability of the set $E$}\label{s:rect}
Next we study the rectifiability properties of $E$.  First we show the following:

\begin{proposition}\label{p:unrectifiable}
Suppose that $(\alpha_n)$ satisfies the requirements of Lemma \ref{2.3}. Then
$E$ is purely unrectifiable.
\end{proposition}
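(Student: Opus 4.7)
I would argue by contradiction: suppose $\H(E \cap R) > 0$ for some rectifiable $R \subset X$. By Kirchheim's theorem, after passing to one of the bi-Lipschitz pieces making up $R$, we may assume $R = \gamma(A)$ for a bi-Lipschitz $\gamma\colon A \to X$ with constants $L^{-1}, L$ and $A \subset \R$ compact. Set $T := f^{-1}(R)$, $B := \gamma^{-1}(R \cap E)$, and $\phi := \gamma^{-1}\circ f\colon T \to B$, which is a bijection. By Lemma \ref{2.2}, $|T| \ge \H(E \cap R)/K > 0$. The key structural observation is that $\phi^{-1}\colon B \to T$ is Lipschitz: since $x_0^*(f(t)) = t$, we have $\|f(t) - f(u)\| \ge |t-u|/\|x_0^*\|$, and combining with $L$-Lipschitzness of $\gamma^{-1}$ yields $|\phi^{-1}(s) - \phi^{-1}(s')| \le L\|x_0^*\|\,|s - s'|$. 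Extend $\phi^{-1}$ to a Lipschitz function on $\R$; it is then differentiable almost everywhere.

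The plan is to show $(\phi^{-1})'(s_0) = 0$ at almost every $s_0 \in B$, for then the Lipschitz area formula $|T| = |\phi^{-1}(B)| \le \int_B |(\phi^{-1})'|\,ds = 0$ contradicts $|T| > 0$. To do this I would first strengthen Lemma \ref{2.3} via Borel--Cantelli: for each $\eta > 0$ the events $\{d(t, M_n^{-1}\N) \le \eta M_n^{-1}|\alpha_n|\}$ remain independent (by the same sub-interval counting argument), with probabilities $\approx 2\eta|\alpha_n|$ whose sum diverges whenever $(\alpha_n) \notin \ell_1$. Hence for almost every $t_0 \in [0,1]$ and every rational $\eta > 0$, infinitely many $n$ satisfy $d(t_0, M_n^{-1}\N) \le \eta M_n^{-1}|\alpha_n|$. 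Fix such a $t_0 \in T$ at which, moreover, $T$ has Lebesgue density $1$ and $f$ is continuous (excluding only a null set and the countable set $\bigcup_n M_n^{-1}\N$); write $p := f(t_0)$ and $s_0 := \gamma^{-1}(p)$.

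Given $\eta > 0$ and an $n$ as above, let $j = j(n,\eta)$ with $|t_0 - jM_n^{-1}| \le \eta M_n^{-1}|\alpha_n|$, and use density of $T$ at $t_0$ to pick $t_n'' \in T$ on the opposite side of $jM_n^{-1}$ from $t_0$ with $|t_n'' - jM_n^{-1}| < \delta_n$ for any prescribed $\delta_n > 0$. Decomposing $f(t_n'') - f(t_0) = (f_n(t_n'') - f_n(t_0))x_n + \text{(rest)}$, the sawtooth jump of $f_n$ at $jM_n^{-1}$ contributes $\approx |\alpha_n|/(2M_n)$ in the $x_n$-direction, while the other components are bounded by $O(|t_n'' - t_0| + 1/M_{n+1})$ via Proposition \ref{prop}. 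An inductive choice of $m_{n+1}$ large enough (depending on $1/|\alpha_n|$) renders the $1/M_{n+1}$ tail negligible, leaving $\|f(t_n'') - f(t_0)\| \ge c\,|\alpha_n|/M_n$ for a universal $c > 0$. Writing $s_n'' := \gamma^{-1}(f(t_n''))$, the difference quotient of $\phi^{-1}$ on the pair $(s_n'', s_0)$ satisfies
\[
\frac{|\phi^{-1}(s_n'') - \phi^{-1}(s_0)|}{|s_n'' - s_0|} = \frac{|t_n'' - t_0|}{|s_n'' - s_0|} \le \frac{\eta M_n^{-1}|\alpha_n| + \delta_n}{c\,|\alpha_n|/(LM_n)} = \frac{L\eta}{c} + \frac{LM_n\delta_n}{c|\alpha_n|}.
\]
Sending $\delta_n \to 0$ first and then diagonalising over $\eta \to 0$ produces a sequence $s_{n_k}'' \to s_0$ in $B$ along which this difference quotient tends to $0$. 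Hence $(\phi^{-1})'(s_0) = 0$ at every $s_0 \in B$ where the Lipschitz extension is differentiable, i.e.\ at almost every $s_0 \in B$, and we are done.

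The main technical hurdle, in my view, is the need for the $\eta$-refinement of Lemma \ref{2.3}: the literal statement there (which corresponds to $\eta = 1$) only bounds the Dini quotient of $\phi^{-1}$ by a positive constant rather than forcing it to $0$, so without the Borel--Cantelli strengthening and the subsequent diagonalisation in $\eta$, one cannot convert the ``large difference quotients of $f$'' supplied by Lemma \ref{2.3} into the pointwise statement $(\phi^{-1})' = 0$ a.e.\ on $B$ that is required for the area-formula contradiction.
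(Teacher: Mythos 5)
Your argument is essentially sound but takes a genuinely different route from the paper's. The paper uses the density statement \eqref{density} of Kirchheim's theorem in an essential way: taking $r=\lambda M_n^{-1}|\alpha_n|$ it produces a point $f(t_n)\in F\cap B(y_0,r)$ with $t_n$ in a \emph{different} interval of $\II_n$ than $t_0$, whence $|x_n^*(\gamma([s_0,s_n]))|\geq 1/4M_n$ by the jump of $f_n$; on the other hand the portion of $\gamma([s_0,s_n])$ inside $F$ has $x_n^*$-image of measure at most $2bL^2\lambda|\alpha_n|/M_n$, and the portion outside $F$ is controlled because $s_0$ is a density point of $A$. The contradiction is $1/M_n$ against $|\alpha_n|/M_n$ with $\alpha_n\to 0$: the paper gets its small parameter for free from $\alpha_n\to 0$ and needs Lemma \ref{2.3} only as stated. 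You use only the bi-Lipschitz half of Kirchheim's theorem, work with density of $T=f^{-1}(R)$ in the domain rather than density of $F$ in the target, and encode the contradiction as $(\phi^{-1})'=0$ a.e.\ on $B$ plus the area formula; the price is the $\eta$-refinement of Lemma \ref{2.3}. Your observation that $\phi^{-1}$ is Lipschitz because $x_0^*\circ f$ is the identity is correct, the area-formula conclusion is fine, and the lower bound $\|f(t_n'')-f(t_0)\|\geq c|\alpha_n|/M_n$ is obtained most cleanly by applying $x_n^*$ (whose norm is at most $b/|\alpha_n|$) to the difference, rather than by estimating the remaining coordinates; no extra condition on $m_{n+1}$ is needed there.

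The one step that genuinely needs repair is the assertion that the events $A_n^\eta=\{d(t,M_n^{-1}\N)\le \eta M_n^{-1}|\alpha_n|\}$ ``remain independent by the same sub-interval counting argument.'' That argument relies on $|\alpha_n|m_n\in\N$, so that $A_n^1$ is a union of cells of a fixed refinement of $\II_n$; for general $\eta$ one would need $\eta|\alpha_n|m_n\in\N$, which fails, and the $A_n^\eta$ need not be exactly independent. The refinement you want is nevertheless true and can be secured in two ways: either build the divisibility into the (still free) choice of the $m_n$, e.g.\ make $|\alpha_n|m_n$ divisible by $n!$, so that exact independence holds for each fixed rational $\eta$ from some index on; or estimate the correlations directly --- the difference $|A_k^\eta\cap A_n^\eta|-|A_k^\eta|\,|A_n^\eta|$ is at most $4\eta|\alpha_n|(M_k+1)/M_n$ in absolute value for $k<n$, and its sum over $k<n\leq N$ is bounded by a constant times $\sum_{n\leq N}|\alpha_n|$ --- and then invoke the Kochen--Stone (quasi-independence) form of the second Borel--Cantelli lemma. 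Separately, you cannot literally ``send $\delta_n\to 0$ first'': the existence of $t_n''\in T$ within $\delta_n$ of $jM_n^{-1}$ on the far side is governed by the density of $T$ at $t_0$ at scale $\eta|\alpha_n|/M_n$, so $\delta_n$ should be taken comparable to $\eta|\alpha_n|/M_n$; this still gives a difference quotient bounded by a constant times $\eta$, and the diagonalisation over $\eta$ goes through.
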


We will require the following lemma, which is a restatement of Kirchheim's theorem \cite{kirchheim}.

\begin{lemma}\label{l:kirchheim}
Suppose that $E\subset X$ and $\gamma \colon [0,1]\to X$ is Lipschitz with
\[\H(\gamma([0,1]) \cap E)>0.\]
Then there exist a measurable $A\subset [0,1]$ of positive measure and an $L \geq 1$ such that $\gamma(A):= F \subset E$, $\gamma$ restricted to $A$ is bi-Lipschitz with bi-Lipschitz constant $L$, and such that for $\H$-a.e. $y_0 \in F$ and every $\eps>0$, if $r$ is sufficiently small then 
\begin{equation}\label{density}
\H(F\cap B(y_0,r))\geq 2r(1-\varepsilon).
\end{equation}
\end{lemma}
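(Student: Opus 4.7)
The lemma is essentially the conjunction of two classical consequences of Kirchheim's theorem: on a subset of positive parameter measure, the Lipschitz curve $\gamma$ acts bi-Lipschitzly, and the image of such a bi-Lipschitz piece carries $\H$-density one at $\H$-almost every point. My plan is to realize both in sequence, using the metric derivative $\operatorname{md}(\gamma,t):=\lim_{s\to t}\|\gamma(s)-\gamma(t)\|/|s-t|$, defined a.e.\ by Kirchheim, and the metric area formula $\int_S \operatorname{md}(\gamma,t)\,dt=\int_{\gamma(S)}\#(\gamma^{-1}(y)\cap S)\,d\H(y)$ for measurable $S\subset[0,1]$.

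First I would extract the bi-Lipschitz piece. The area formula applied to $\{\operatorname{md}(\gamma,\cdot)=0\}$ shows its image is $\H$-null, so $S_0:=\gamma^{-1}(E)\cap\{\operatorname{md}(\gamma,\cdot)>0\}$ must have positive Lebesgue measure, since otherwise $\H(\gamma([0,1])\cap E)=0$ would contradict the hypothesis. Truncating to a level set $\{1/L_0\le\operatorname{md}(\gamma,\cdot)\le L_0\}$ and applying Egoroff's theorem to make the defining metric-derivative limit uniform yields a set of positive measure on which $\gamma$ is bi-Lipschitz on every sufficiently small scale. To promote this to a genuine bi-Lipschitz restriction, one must further discard pairs of parameters that are far apart in $[0,1]$ but whose images are close in $X$; this can be arranged by fixing a small scale and using a Vitali-type selection, producing the desired measurable $A$ of positive measure on which $\gamma|_A$ is $L$-bi-Lipschitz for some $L\ge 1$.

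Next I would verify the density estimate. At $\H$-a.e.\ $y_0=\gamma(t_0)\in F$, one may assume $t_0$ is simultaneously a Lebesgue density point of $A$, a Lebesgue point of $\operatorname{md}(\gamma,\cdot)\mathbf{1}_A$, and a point at which the metric-derivative limit $\operatorname{md}(\gamma,t_0)=m_0>0$ actually holds. Given $\eps>0$, fix $\delta>0$ small in terms of $m_0$ and $\eps$. For $r$ sufficiently small, the bi-Lipschitz bound forces $\gamma^{-1}(B(y_0,r))\cap A\subset[t_0-Lr,t_0+Lr]$, and on this small interval the uniform metric-derivative estimate gives the reverse inclusion $A\cap[t_0-r/(m_0+\delta),t_0+r/(m_0+\delta)]\subset \gamma^{-1}(B(y_0,r))\cap A$. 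Since $\gamma|_A$ is injective, the area formula collapses to $\H(F\cap B(y_0,r))=\int_{A\cap\gamma^{-1}(B(y_0,r))}\operatorname{md}(\gamma,t)\,dt$, and combining the two inclusions with the Lebesgue-density and Lebesgue-point properties at $t_0$ bounds this integral below by $(1-\eps)\cdot 2r$ once $r$ and $\delta$ are small enough.

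The step I expect to be the main obstacle is the bi-Lipschitz extraction itself. Egoroff only controls the metric-derivative limit for small parameter-differences, so nothing a priori prevents $\gamma$ from returning close to itself for parameter pairs far apart in $[0,1]$, which would spoil the lower bi-Lipschitz bound needed to invoke the area formula cleanly. The remedy is a scale-localization argument that excises such self-returns on a fixed scale; this is the technical heart of the lemma and is what makes the \emph{bi-Lipschitz} (as opposed to merely \emph{locally bi-Lipschitz}) statement nontrivial.
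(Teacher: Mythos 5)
Your proposal is correct, but it is doing something the paper does not attempt: the paper gives no proof of this lemma at all, presenting it as a restatement of Kirchheim's results \cite{kirchheim} (metric differentiability of Lipschitz curves, the one-dimensional area formula $\H(\gamma(S))=\int_S \operatorname{md}(\gamma,t)\,dt$ for injective pieces, the bi-Lipschitz decomposition of $\{\operatorname{md}>0\}$, and density one for rectifiable metric spaces). What you have written is essentially a self-contained re-derivation of Kirchheim's theorem in the curve case, and both halves of your argument -- the Egoroff extraction of a bi-Lipschitz piece and the Lebesgue-point computation giving $\H(F\cap B(y_0,r))\ge 2r(1-\eps)$ via the area formula -- are sound; this buys the reader a proof where the paper only offers a citation. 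One correction of emphasis: the step you single out as ``the technical heart'' (excluding parameter pairs that are far apart in $[0,1]$ but whose images nearly coincide) is in fact the easiest part and needs no Vitali-type selection. Once Egoroff produces a set $B$ of positive measure and a scale $\delta>0$ such that $\|\gamma(s)-\gamma(t)\|\ge c\,|s-t|$ whenever $t\in B$ and $0<|s-t|<\delta$, you simply intersect $B$ with a subinterval of length less than $\delta$ that meets $B$ in positive measure; every pair of points of the resulting $A$ is then automatically within distance $\delta$, so the lower bi-Lipschitz bound holds globally on $A$ (the upper bound being the Lipschitz constant of $\gamma$). This localization is exactly Kirchheim's Lemma~4 specialized to $n=1$. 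The only other point worth flagging is measurability: for an arbitrary $E\subset X$ the preimage $\gamma^{-1}(E)$ need not be measurable, so one should first replace $\gamma([0,1])\cap E$ by a compact subset of positive $\H$-measure (legitimate by Borel regularity of $\H$ on sets of finite measure; in the paper's application $E$ is an analytic set, so this is harmless).
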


As usual, a function is said to be \emph{bi-Lipschitz} with bi-Lipschitz constant $L$ if both the function and its inverse are Lipschitz with Lipschitz constant $L$.

\begin{proof}[Proof of Proposition \ref{p:unrectifiable}]
Suppose that $E$ is not purely unrectifiable and suppose that $\gamma\colon [0,1] \to X$ satisfies the conclusion of Lemma \ref{l:kirchheim}.  We will use the same notation as in Lemma \ref{l:kirchheim}.

Note that, since $\gamma$ is bi-Lipschitz on $A$, if we write $y_0=\gamma(s_0)$, then ``$\H$-a.e. $y_0\in F$'' is equivalent to ``$\H$-a.e. $s_0\in A$''. Also if for each $y_0$ we choose a $t_0$ such that $y_0=f(t_0)$, then by Lemma \ref{2.3}, for almost every $t_0$ there are infinitely many $n$ for which $d(t_0,M_n^{-1}\N)<M_n^{-1}|\alpha_n|$. By the Luzin property of $f$, this also implies that for $\H$-a.e. $y_0\in F$, there are infinitely many $n$ such that
\begin{equation}\label{t}\dist(t_0,I^c)<M_n^{-1}|\alpha_n|,\end{equation}
where $I$ is the interval in $\II_n$ that contains $t_0$.  In what follows, we fix a $y_0=\gamma(s_0)=f(t_0)$ for which \eqref{density} holds for every $\epsilon>0$ and sufficiently small $r>0$, and also \eqref{t} holds for infinitely many $n$. We also assume that $s_0$ is a density point of $A$. 

Recall that $K<2$. Therefore, we can fix some positive constants $\lambda$ and $\eps$ satisfying the inequalities $(\lambda+1)/\lambda<2/K$ and $\eps<1-K(\lambda+1)/2\lambda$.
Note that by Lemma \ref{2.2} and by \eqref{density}, for every sufficiently small $r$ we must have
\begin{equation}\label{e:notisolated}|\{t\in [0,1]:f(t)\in F\cap B(y_0,r)\}|\geq \frac{2r(1-\varepsilon)}{K}>\frac{r(\lambda+1)}{\lambda}.\end{equation} 
By putting $r=\lambda M_n^{-1}|\alpha_n|$ into \eqref{e:notisolated}, the right hand side equals $r+M_n^{-1}|\alpha_n|$, therefore, by \eqref{t}, there exists a $t_n\in [0,1]$ that does not belong to the same interval $I\in\II_n$ as $t_0$, and such that $f(t_n)\in F\cap B(y_0,r)$. We let $s_n\in A$ be such that $f(t_n)=\gamma(s_n)$ and we write $[s_0,s_n]$ for the non-trivial closed interval with endpoints $s_0,s_n$ (we do not assume that $s_0<s_n$).  Note that, since $\gamma$ is bi-Lipschitz,
\begin{equation}\label{e:sdist}|s_0-s_n| \leq L\|\gamma(s_0)-\gamma(s_n)\|\leq Lr = L\lambda M_n^{-1}|\alpha_n| \to 0.\end{equation}

By the definition of the functions $f_n$, for any $n\ge 1$, one of the values of $f_n(t_0),f_n(t_n)$ is zero and the other is at least $1/2M_n-r=(1/2-\lambda|\alpha_n|)/M_n>1/4M_n$ for every large enough $n$.  Therefore, there exist arbitrarily large $n$ for which the length of the interval $x_n^*(\gamma([s_0,s_n]))$ is at least
\begin{equation}\label{e:gamma-big-meas}|x_n^*(\gamma([s_0,s_n])|\geq |f_n(t_n)-f_n(t_0)| \geq 1/4M_n.\end{equation} 

Further, recall that $s_0$ is a density point of $A$ and that $|\alpha_n|\|x_n^*\|<b$.  Therefore, if we fix $0<\delta < (16 L^2\lambda b)^{-1}$, there exists an $R>0$ such that, if $0<\rho<R$,
\[|[s_0-\rho,s_0+\rho]\cap A| \geq (1-\delta)2\rho.\]
Since $s_n\to s_0$, for large enough $n$ we see that this inequality is true for $\rho=|s_0-s_n|$.  Then, since $\gamma$ is $L$-Lipschitz, we use \eqref{e:sdist} to deduce
\[\H(\gamma([s_0,s_n]) \setminus F)\le\H(\gamma([s_0,s_n]\setminus A)) \leq 2L\delta\rho \leq 2L^2\delta\lambda M_n^{-1} |\alpha_n|.\]
Since $x_n^*$ is $b/|\alpha_n|$ Lipschitz,
\[|x_n^*(\gamma([s_0,s_n])\setminus F)| \leq 2L^2\delta\lambda M_n^{-1}b.\]
Combining this with \eqref{e:gamma-big-meas} and using the choice of $\delta$, we see that there are arbitrarily large $n$ for which
\begin{equation}\label{e:gamma-cont}|x_n^*(\gamma([s_0,s_n]\cap F)| \geq 1/8M_n.\end{equation}

On the other hand, for any $n$,
\begin{align*}
|x_n^*(\gamma([s_0,s_n]) \cap F)| &\leq |f_n(\{t: f(t)\in \gamma([s_0,s_n])\}| \\
&\leq |f_n(\{t: t\in x_0^*(\gamma ([s_0,s_n]))\}|\\
&\leq |f_n(\{t: |t-t_0| \le \|x_0^*\| L |s_n-s_0|\})|\\
&\leq 2 b L^2 \lambda M_n^{-1}|\alpha_n|.
\end{align*}
Note that, in the final inequality we have used the fact that $f_n$ is piecewise Lipschitz with Lipschitz constant 1, \eqref{e:sdist} and the fact that $\|x_0^*\|\leq b$.  Since $(\alpha_n)\in \l^2$ we have $\alpha_n\to 0$.  This contradicts \eqref{e:gamma-cont}.
\end{proof}

Proposition \ref{p:unrectifiable} is nicely complemented by the following proposition:
\begin{proposition}\label{p:rect}
Suppose that $\alpha\in \l_1$ and $\sum_{n=1}^\infty|\alpha_n|<1$. Then $E$ is rectifiable.
\end{proposition}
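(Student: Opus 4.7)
The plan is to show that $f\colon [0,1]\to X$ is of bounded variation with $\operatorname{TV}(f)<2$, and then to use its arc-length reparametrization to realize $E$ as a subset of a single Lipschitz image of a compact interval, which immediately gives rectifiability. The bounded variation bound comes from examining each component: for $n\geq 1$, the function $f_n$ is a rescaled copy of $g$ which on each of its $M_n$ periods of length $1/M_n$ is zero on the first half, linear with slope $1$ on the second half (growing from $0$ to $1/(2M_n)$), and then jumps back down to $0$, so $\operatorname{TV}(f_n)=1$, and likewise $\operatorname{TV}(f_0)=1$. The triangle inequality in $X$ then gives, for every partition of $[0,1]$,
\[\sum_i \|f(t_i)-f(t_{i-1})\| \leq \sum_{n=0}^\infty \|x_n\|\sum_i |f_n(t_i)-f_n(t_{i-1})| \leq 1 + \sum_{n=1}^\infty|\alpha_n| < 2.\]

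Next I would pass to the arc-length parametrization. Set $V(t)=\operatorname{TV}(f|_{[0,t]})$; this is non-decreasing on $[0,1]$ with $V(1)<2$, and satisfies $\|f(s)-f(t)\|\leq |V(t)-V(s)|$. If $V(s)=V(t)$ for $s<t$ then $f$ has zero total variation on $[s,t]$ and is therefore constant there, so $\psi(V(t)):=f(t)$ defines a $1$-Lipschitz map $\psi\colon V([0,1])\to X$. Because $f$ has bounded variation, the one-sided limits $f(t^{\pm})$ exist at every point, so $\psi$ extends by continuity to $\overline{V([0,1])}$; I would then extend further to a $1$-Lipschitz map $\tilde\psi\colon[0,V(1)]\to X$ by linear interpolation in $X$ across each of the (countably many) gaps of $V([0,1])$ in $[0,V(1)]$. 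Since $E=\psi(V([0,1]))\subset\tilde\psi([0,V(1)])$, the set $E$ lies inside a Lipschitz image of a compact real interval, and is therefore rectifiable.

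The only delicate point is the extension of $\psi$ across the gaps of $V([0,1])$: the function $f$ is genuinely discontinuous, since each $f_n$ has left-sided jumps at the endpoints of the intervals in $\II_n$, so $V$ inherits countably many jumps and $V([0,1])$ is missing a countable family of half-open intervals. The existence of one-sided limits of a BV function into a Banach space, together with linear interpolation between pairs of points of $X$, resolves this cleanly; the rest of the argument is routine.
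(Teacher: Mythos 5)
Your proof is correct, and it reaches the same stronger conclusion as the paper --- that $E$ is contained in a single curve of finite length --- by a genuinely different route. The paper first transports the construction to $\ell_1$ via the $1$-Lipschitz map $\alpha_n e_n \mapsto x_n$, builds explicit polygonal approximations $\gamma_n$ to the graphs of the partial sums $\sum_{k\le n} f_k x_k$ (inserting vertical segments to bridge the jump discontinuities of the $f_k$), bounds the incremental length of $\gamma_n-\gamma_{n-1}$ by $\tfrac32|\alpha_n|$ coordinatewise in $\ell_1$, and passes to a uniform limit after fixing a common parametrization $\tau$. You instead stay in $X$, observe that $\operatorname{TV}(f_n)=1$ for every $n$, so the triangle inequality gives $\operatorname{TV}(f)\le\sum_n\|x_n\|\operatorname{TV}(f_n)=1+\sum_{n\ge1}|\alpha_n|<2$, and then run the standard arc-length reparametrization of a BV map, filling the countably many gaps of $V([0,1])$ (which correspond exactly to the jumps the paper bridges with vertical segments) by linear interpolation. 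The quantitative input is identical in both arguments --- the curve has length controlled by $\sum_n\|x_n\|\operatorname{TV}(f_n)$, finite precisely because $\alpha\in\ell_1$ --- but your version avoids the reduction to $\ell_1$ and the somewhat fiddly common parametrization via $\tau$, while the paper's concrete polygonal construction buys an explicit limit curve and an exact coordinatewise length count, neither of which is needed for the statement. The delicate points you flag (well-definedness of $\psi$ on level sets of $V$, the extension to $\overline{V([0,1])}$, and the global $1$-Lipschitz bound for the interpolated extension $\tilde\psi$) all check out.
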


\begin{proof}
We will in fact show that $E$ is contained within a curve of finite length.  Without loss of generality we can assume that $X=\ell_1$, and $x_n=\alpha_ne_n$ where $e_0,e_1,\dots$ is the standard basis of $\ell_1$. Indeed, the mapping $T:\,\ell_1\to X$ defined by $\alpha_ne_n\to x_n$ is Lipschitz with Lipschitz constant 1, therefore it cannot increase the length of any curve.

We will say that a line segment is \emph{vertical} if $e_0^*$ is constant on it.
For each $n$ let $$E_n=\{\sum_{k=0}^n f_k(t)x_k:\,t\in[0,1]\}.$$ Since the functions $f_n$ are linear on each interval $I\in\II_{n+1}$, and continuous on each $I\in\II_n$, therefore $E_n$ consists of $M_n$ pieces, each of which is a polygon consisting of two non-vertical line segments, and when projected onto $e_0$, each piece is mapped onto an interval $I\in\II_n$. We define $\gamma_n$ to be the polygon that connects the right endpoint of each piece of $E_n$ to the left endpoint of the next piece of $E_n$ by the (vertical) line segment between these two points, in the natural order.

To calculate the length of $\gamma_n\subset\ell_1$, we can simply add together the length of the projections of its line segments onto the coordinate directions $e_0,\dots,e_n$. In other words, we need to calculate the sum of the lengths of $\ga_n-\ga_{n-1}$ in $\ell_1$. This is very easy: since $f_n$ is piecewise Lipschitz with Lipschitz constant 1, the $f_n(t)x_n$ term adds at most $|\alpha_n||I|$ length on every interval $I$; and the vertical segments add at most $|\alpha_n|/2M_n$ length at the right endpoint of the intervals $I\in\II_n$. Therefore the length of each $\gamma_n-\gamma_{n-1}$ is at most $\frac{3}{2}|\alpha_n|$.

There is a natural common parametrization of the curves $\ga_n$ by the interval $[0,1]$. Let $\tau:\,[0,1]\to[0,1]$ be a continuous increasing function for which the preimage of each endpoint of each interval $I\in\II_n$ is a non-degenerate interval. Now choose the (unique) parametrization of $\ga_n$ for which $e_0^*(\ga_n(t))=\tau(t)$, and for which $\ga_n(t)$ is linear on each interval on which $\tau$ is constant. With this parametrization, $\|\ga_n-\ga_{n-1}\|_\infty\le\frac 32|\alpha_n|$, therefore $\ga_n$ converges uniformly to a continuous $\ga:\,[0,1]\to \ell_1$ whose image has length at most $\frac{3}{2}$, and it covers the set $E$. Indeed, $f(t)\in\ga(\tau^{-1}(t))$ for every $t\in[0,1]$.

%\begin{remark}
%The assumption $\sum_{n=1}^\infty\alpha_n^2<1$ in the construction of the set $E$ and the assumption $\sum_{n=1}^\infty|\alpha_n|<1$ in the lemma above are not necessary. We imposed these just for the convenience of the proofs. We could have used any $\alpha\in\ell_2$ and any $\alpha\in \ell_1$, respectively, since the notions of being rectifiable and purely unrectifiable are invariant under the bi-Lipschitz mapping $x_0\to x_0$, $x_n\to cx_0$, for any constant $c$.
%\end{remark}

%Now suppose that $t\in [0,1]$ satisfies part (ii) of Lemma \ref{2.3} and let $n_0\in \N$ such that if $n>n_0$ then
%\[\frac{d(t,2^{-n}\N)}{2^{-n}\alpha_n} \geq 1.\]
%Let $N<n_1<n_2<n_3<\ldots$ be such that $\alpha_{n_i} \geq \alpha_m$
%for all $m>n_i$ and all $i\in \N$.  For any $i\in \N$, suppose that $t+h$ belongs to the same interval of the length $1/2^m$ as $t$ does for each $m<n_i$ but not for $n_i$.  Then
%\begin{align*}|f(t+h)-f(t)| &\leq \sum_{m\in\N}|f_m(t+h)-f_m(t)|\\
%&\leq \sum_{m<n}\alpha_m |h| + \sum_{m\geq n_i} \alpha_m 2^{-m}\\
%&\leq \|\alpha\|_1 |h| + \alpha_{n_i} 2^{-n_i+1}\\
%&\leq (\|\alpha\|_1+2)|h|.
%\end{align}
%For the third inequality we used the defining property of the $n_i$.  For the final inequality we used the fact that $t$ and $t+h$ do not belong to the same dyadic interval of length $1/2^{n_i}$, and so
%\[|h|\geq d(t,2^{-n_i}\N) \geq 2^{-n_i}\alpha_{n_i}.\]
%This finishes the proof.
\end{proof}

\section{Projections of the set $E$}\label{s:projection}
As before, we denote $Y_0=\overline{\spa}\{x_0,x_1,x_2,\dots\}$.  In this section we show that for any $0\neq x^*\in Y_0^*$, $x^*$ maps $E$ onto a set of positive measure. Without loss of generality we can assume that $\|x^*\|=1$. We fix such an $x^*$ and denote $x^*(x_n)=\tilde\alpha_n$. By choosing $\eps_n=\pm 1$ to be the sign of $\tilde\alpha_n$, it follows from (ii) of Proposition \ref{prop} that
\[\sum_{n=1}^\infty |\tilde\alpha_n| = \sum_{n=1}^\infty \eps_n x^*(x_n) = x^*(\sum_{n=1}^\infty \eps_n x_n) <1.\]

Consider our construction of a function and its graph (call them $\tilde f$, $\tilde E$) in $\ell_1$ with $\alpha_n$ replaced by $\tilde\alpha_n$ for $n\ge 1$, and with $\tilde x_0=e_0$, $\tilde x_n=\tilde\alpha_n e_n$ where $e_0,e_1,\dots$ is the standard basis in $\ell_1$. By Proposition \ref{p:rect}, $\tilde E$ is rectifiable.

Now consider the projection $P:\,\ell_1\to\R^2$ defined by $e_0\to x$, $e_n\to y$ for $n\ge 1$, where $x,y$ are the standard coordinates of $\R^2$. Then
\begin{equation}\label{e:projection-plane}P(\tilde f(t))=(t,\sum_{n=1}^\infty f_n(t)\tilde \alpha_n).\end{equation}
The set $P(\tilde E)\subset\R^2$ is rectifiable, since it is covered by the Lipschitz image of the rectifiable set $\tilde E$. Also note that the projection of $P(\tilde E)$ to the $x$ coordinate direction is the whole interval $[0,1]$ and it maps $P(\tilde f(t))$ to $t$.  Therefore, for Lebesgue positively many $t\in[0,1]$, $P(\tilde E)$ has an approximate tangent at $P(\tilde f(t))$ and this tangent is not vertical.

Fix such a $t\in[0,1]$. Then $P(\tilde E)$ has positive projection onto every line except possibly to the line orthogonal to the approximate tangent
at $P(\tilde f(t))$. Let $s$ denote the slope of this approximate tangent.

If $\tilde\alpha_n=0$ for every $n\ge 1$ then $\tilde\alpha_0\neq 0$ and $x^*$ maps $E$ onto an interval of length $|\tilde\alpha_0|> 0$. From now on we assume that $\tilde\alpha_n\neq 0$ for some $n\ge 1$ and we fix such an $n$.

Let $I$ denote the interval $I\in\II_n$ that contains $t$. Choose $\pm$ so that $t':=t\pm 1/2M_n$ also belongs to $I$. Since the functions $f_m$ are linear on $I$ for $m<n,$ and they are periodic with period $1/M_m$ and hence also with period $1/2M_n$ for $m>n$, therefore 
\[f_m(t'+h)-f_m(t') = f_m(t+h)-f_m(t)\] for any $m\neq n$ and $t,t',t+h,t'+h\in I$. On the other hand, for $m=n$ and for $t,t',t+h,t'+h\in I$, one of the two sides of the equation is 0 and the other is $h\neq 0$ (this is where we make use of the sloped line segments in the definition of each $f_n$). Therefore $P(\tilde E)$ also has an approximate tangent at $P(\tilde f(t'))$ of slope $s'=s\pm\tilde\alpha_n\neq s$, and so $P(\tilde E)$ has a positive projection in every direction.

The proof is finished by considering the projection of $P(\tilde E)$ in the $(\tilde\alpha_0,1)$ direction: by \eqref{e:projection-plane}, for every $t\in [0,1]$ we obtain
$$\tilde\alpha_0 t+1\sum_{n=1}^\infty f_n(t)\tilde \alpha_n=\sum_{n=0}^\infty f_n(t)\tilde \alpha_n=x^*(f(t)),$$
so indeed, $|x^*(E)|>0$.

\section{Conclusion}\label{s:proof-and-rect}
Let $\al=(\al_n)$ be an arbitrary sequence of rational numbers in $\ell_2\setminus\ell_1$ with $\|\al\|<1$, and fix some even numbers $m_n$ s.t. $\alpha_nm_n\in\N$ for every $n$. We construct the sequence $x_n$ as in Section \ref{s:construction}, denote $Y=\overline{\spa}\{x_1,x_2,\dots\}$, and fix a dense sequence $x_0^n$, $n=1,2,\dots$ of $X\setminus Y$. Then, for each $x_0=x_0^n$ we define the set $E=E^n$ as in Section \ref{s:construction}.  By Lemma \ref{2.2}, $\H(E^n) < 2$ and each $E^n$ is purely unrectifiable by Proposition \ref{p:unrectifiable}.

Now suppose that $0\neq x^*\in X^*$.  Then there exists some $x_0^n$ such that $x^*$ is not identically zero on $Y_0^n:=\overline\spa\{x_0^n,x_1,x_2,\ldots\}$.  Thus, by Section \ref{s:projection}, $|x^*(E^n)|>0$.  Therefore
\[E=\bigcup_{n=1}^\infty \frac{1}{2^n}E^n\]
satisfies all the requirements of Theorem \ref{thm}. 

\bigskip
We finish this paper by a brief discussion of the projections of a rectifiable set. Recall that for a rectifiable set $E\subset\R^n$ of positive measure, all projections are positive except possibly those that belong to a 1-codimensional subspace, namely, the ones for which $\theta\in \ker x^*$ for an approximate tangent $\theta$. Although in a general Banach space we may not have any approximate tangents, we show that there is always at most a 1-codimensional subspace of zero projections:

\begin{theorem}\label{2} For every rectifiable set $E\subset X$ of positive $\H$-measure, $x^*(E)$ has positive Lebesgue measure for every $x^*\in X^*$ except possibly for those that belong to a closed linear subspace $Y^*\neq X^*$.
\end{theorem}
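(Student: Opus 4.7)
Given a rectifiable $E \subset X$ with $\H(E) > 0$, I would first extract a well-controlled piece of $E$. By rectifiability some Lipschitz $\gamma \colon [0,1] \to X$ satisfies $\H(E \cap \gamma([0,1])) > 0$, and applying Lemma \ref{l:kirchheim} to $\gamma$ produces $A \subset [0,1]$ of positive Lebesgue measure and an $L \ge 1$ such that $\gamma|_A$ is $L$-bi-Lipschitz and $F := \gamma(A) \subset E$. Writing $K$ for a Lipschitz constant of $\gamma$ on $[0,1]$, I would take as candidate
\[
Y^* := \{x^* \in X^* : (x^* \circ \gamma)'(t) = 0 \text{ for a.e.\ } t \in A\}.
\]

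Next I would verify three things about $Y^*$. First, it is a closed linear subspace of $X^*$: linearity is immediate, and if $x_n^* \to x^*$ in $X^*$ with $x_n^* \in Y^*$, then the a.e.\ bound $|((x^* - x_n^*) \circ \gamma)'(t)| \le K\|x^* - x_n^*\|$ lets one pass to the limit and deduce $(x^* \circ \gamma)' = 0$ a.e.\ on $A$. Second, $Y^*$ contains every $x^*$ with $|x^*(E)| = 0$: such an $x^*$ has $|x^*(F)| = 0$, i.e.\ $|h(A)| = 0$ for $h := x^* \circ \gamma$, and the area formula
\[
\int_A |h'(t)|\,dt = \int_\R \#\{t \in A : h(t) = y\}\,dy
\]
then forces $h' = 0$ a.e.\ on $A$, since the right-hand integrand is supported on $h(A)$ and hence is zero a.e.

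The main obstacle is the third point, that $Y^* \ne X^*$. The plan is to combine Hahn--Banach with Lebesgue density. I would fix $\eps$ with $0 < \eps < 1/(KL)$ and pick a density point $t_0$ of $A$ at which the one-sided defect $|[t_0, t_0 + r] \setminus A|$ is $o(r)$ as $r \to 0^+$ (this holds at a.e.\ density point, since the one-sided defect is bounded above by the two-sided defect $|[t_0 - r, t_0 + r] \setminus A| = o(r)$). One can then choose $t_1 \in A$ with $t_1 - t_0 > 0$ so small that $|[t_0, t_1] \setminus A| \le \eps(t_1 - t_0)$. Since $\gamma(t_1) \ne \gamma(t_0)$, Hahn--Banach supplies $x^* \in X^*$ with $\|x^*\| = 1$ and $x^*(\gamma(t_1) - \gamma(t_0)) = \|\gamma(t_1) - \gamma(t_0)\|$. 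If this $x^*$ lay in $Y^*$, absolute continuity of $x^* \circ \gamma$ together with the vanishing of $(x^* \circ \gamma)'$ on $A$ would give
\[
\|\gamma(t_1) - \gamma(t_0)\| = \int_{t_0}^{t_1} (x^* \circ \gamma)'(t)\,dt = \int_{[t_0, t_1] \setminus A} (x^* \circ \gamma)'(t)\,dt \le K\eps(t_1 - t_0),
\]
whereas bi-Lipschitzness forces $\|\gamma(t_1) - \gamma(t_0)\| \ge (t_1 - t_0)/L$; together this yields $1/L \le K\eps$, contradicting $\eps < 1/(KL)$. Hence $x^* \notin Y^*$, and $Y^*$ is the desired proper closed subspace containing all $x^*$ with $|x^*(E)| = 0$.
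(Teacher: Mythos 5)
Your proposal is correct and follows essentially the same route as the paper: extract a bi-Lipschitz piece $\gamma|_A$ via Lemma \ref{l:kirchheim}, take $Y^*$ to be the kernel of the map $x^*\mapsto (x^*\circ\gamma)'|_A$ (closed by the uniform Lipschitz bound, and containing every $x^*$ with null projection by the area formula), and prove properness by choosing two points of $A$ with small complementary measure between them together with a Hahn--Banach norming functional. The only cosmetic difference is that you reach properness by a fundamental-theorem-of-calculus contradiction where the paper directly lower-bounds $|x^*(\gamma(A))|$ by the length of $x^*(\gamma([s_1,s_2]))$ minus the measure of $x^*(\gamma([s_1,s_2]\setminus A))$; the two estimates are interchangeable.
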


Of course, this theorem is sharp in every Banach space, e.g. if the set $E$ is a straight line then indeed it has positive projection in all directions except for those that belong to a 1-codimensional subspace.

\begin{proof}
By Lemma \ref{l:kirchheim}, there is a Lipschitz curve $\gamma:\,[0,1]\to X$ such that $\gamma$ is bi-Lipschitz with a bi-Lipschitz constant $L$ on a set $A\subset[0,1]$ of positive measure, and $\ga(A)\subset E$. Our aim is to show that $\gamma(A)$ has large projections.

We pick $s_1,s_2\in A$ such that $|A\cap[s_1,s_2]|\ge (1-1/2L^2)|s_2-s_1|$. Then, for any $x^*\in S(X^*)$ for which 
%\begin{equation}\label{slice}
$|x^*(\ga(s_2)-\ga(s_1))|>\|\ga(s_2)-\ga(s_1)\|/2$,
%\end{equation}
we obtain
\begin{equation}\label{rect}|x^*(\gamma(A))|>\|\ga(s_2)-\ga(s_1)\|/2-|x^*(\gamma([s_1,s_2]\setminus A))|
\end{equation} where $\|\ga(s_2)-\ga(s_1)\|/2\ge |s_2-s_1|/2L$ and
$|x^*(\gamma([s_1,s_2]\setminus A))|\le L|[s_1,s_2]\setminus A)|\le |s_2-s_1|/2L$. That is, the left side of \eqref{rect} is positive and so we have found at least one positive projection for $\gamma(A)$.

Now consider an arbitrary $x^*\in X^*$. Since $x^*\circ\gamma$ is Lipschitz, it has a derivative $b=b_{x^*}\in L^{\infty}(A)$ bounded by $\|x^*\|L$. We denote by $T$ the bounded linear mapping $X^*\to L^\infty(A)$ defined by 
$x^*\to b_{x^*}$. Since a Lipschitz function $\R\to\R$ maps $A$ onto a null set if and only if its derivative is zero at almost every point of $A$, $x^*(\gamma(A))$ has zero measure if and only if $x^*\in\ker T$. This is a closed linear subspace and from the previous paragraph we know that indeed $\ker T\neq X^*$.
\end{proof}
\

%\bibliography{references}
\end{document}